\pdfoutput=1
\RequirePackage{ifpdf}
\ifpdf 
\documentclass[pdftex]{sigma}
\else
\documentclass{sigma}
\fi

\usepackage{tikz}

\numberwithin{equation}{section}

\newtheorem{Theorem}{Theorem}[section]
\newtheorem{Corollary}[Theorem]{Corollary}
\newtheorem{Proposition}[Theorem]{Proposition}
{ \theoremstyle{definition}

\newtheorem{Example}[Theorem]{Example}
\newtheorem{Remark}[Theorem]{Remark} }

\newcommand{\A}{\mathcal{A}}
\newcommand{\B}{\mathcal{B}}
\newcommand{\Z}{\mathbb{Z}}

\newcommand{\C}{\mathbb{C}}

\newcommand{\Cat}{\operatorname{Cat}}
\newcommand{\Shi}{\operatorname{Shi}}
\newcommand{\Der}{\operatorname{Der}}

\newcommand{\Ker}{\operatorname{Ker}}

\newcommand{\sint}{\sum\nolimits}

\begin{document}

\newcommand{\arXivNumber}{2009.13710}

\renewcommand{\thefootnote}{}

\renewcommand{\PaperNumber}{038}

\FirstPageHeading

\ShortArticleName{The Primitive Derivation and Discrete Integrals}

\ArticleName{The Primitive Derivation and Discrete Integrals\footnote{This paper is a~contribution to the Special Issue on Primitive Forms and Related Topics in honor of~Kyoji Saito for his 77th birthday. The full collection is available at \href{https://www.emis.de/journals/SIGMA/Saito.html}{https://www.emis.de/journals/SIGMA/Saito.html}}}

\Author{Daisuke SUYAMA~$^{\rm a}$ and Masahiko YOSHINAGA~$^{\rm b}$}

\AuthorNameForHeading{D.~Suyama and M.~Yoshinaga}

\Address{$^{\rm a)}$~Faculty of Integrated Media, Wakkanai Hokusei Gakuen University,\\
\hphantom{$^{\rm a)}$}~1-2290-28 Wakabadai, Wakkanai, Hokkaido 097-0013, Japan}
\EmailD{\href{mailto:suyama@wakhok.ac.jp}{suyama@wakhok.ac.jp}}

\Address{$^{\rm b)}$~Department of Mathematics, Faculty of Science, Hokkaido University,\\
\hphantom{$^{\rm b)}$}~Kita 10, Nishi 8, Kita-Ku, Sapporo 060-0810, Japan}
\EmailD{\href{mailto:yoshinaga@math.sci.hokudai.ac.jp}{yoshinaga@math.sci.hokudai.ac.jp}}

\ArticleDates{Received September 30, 2020, in final form April 09, 2021; Published online April 13, 2021}

\Abstract{The modules of logarithmic derivations for the (extended) Catalan and Shi arrangements associated with root systems are known to be free. However, except for a~few cases, explicit bases for such modules are not known. In this paper, we construct explicit bases for type $A$ root systems. Our construction is based on Bandlow--Musiker's integral formula for a basis of the space of quasiinvariants. The integral formula can be considered as an expression for the inverse of the primitive derivation introduced by K.~Saito. We~prove that the discrete analogues of the integral formulas provide bases for Catalan and Shi arrangements.}

\Keywords{hyperplane arrangements; freeness; Catalan arrangements; Shi arrangements}

\Classification{52C35; 20F55}

\begin{flushright}
\begin{minipage}{60mm}
\it
Dedicated to Professor Kyoji Saito\\ on the occasion of his 77th birthday
\end{minipage}
\end{flushright}

\renewcommand{\thefootnote}{\arabic{footnote}}
\setcounter{footnote}{0}

\section{Introduction}

\subsection{Background}

Let $V$ be an $\ell$-dimensional linear space over $\C$.
Let $S=S(V^*)$ be the set of polynomial functions on $V$.
Choose $x_1, \dots, x_\ell$ as a basis of $V^*$ and identify $S$ with the
polynomial ring $\C[x_1, \dots, x_\ell]$.
Let $\Der_S=\bigoplus_{i=1}^\ell S\partial_i$ be the module of polynomial
vector fields ($\C$-linear derivations of $S$).

Let $\A=\{H_1, \dots, H_n\}$ be a central arrangement of hyperplanes.
For each $H\in\A$, choose a~linear form $\alpha_H\in V^*$ such that
$H=\Ker(\alpha_H)$.
For each nonnegative integer $m$, let
\begin{gather*}
D(\A, m):=\big\{\delta\in\Der_S\mid \delta\alpha_H\in(\alpha_H^m),
\mbox{ for any }H\in\A\big\}.
\end{gather*}
If $m=1$, $D(\A, 1)$ is simply denoted by $D(\A)$, whose elements are
called logarithmic derivations. The module $D(\A)$ was
introduced in~\cite{sai-log} for the purpose of computing Gauss--Manin
connections, and $D(\A, m)$ was introduced by Ziegler~\cite{zie} for studying
restrictions of free arrangements.
The~alge\-b\-raic structures of these modules are thought to reflect
the combinatorial nature of~$\A$ (see~\cite{ot, yos-survey}).

Now we assume that $\A$ is a Coxeter arrangement, that is,
the set of reflecting hyperplanes of a finite irreducible real reflection group
$W\subset {\rm GL}(V)$.
The definition of $D(\A, m)$ naturally gives rise to a filtration:
\begin{gather}
\label{eq:filt}
\Der_S=D(\A, 0)\supset D(\A, 1)\supset D(\A, 2)\supset\cdots.
\end{gather}
The filtration~(\ref{eq:filt}) is closely related to several important structures.
First, taking $W$-invariant parts, we have
\begin{gather*}
D(\A, 0)^W= D(\A, 1)^W\supset D(\A, 2)^W= D(\A, 3)^W\supset\cdots.
\end{gather*}
This filtration is known to be equal to the semi-infinite Hodge filtration studied by
K.~Saito~\cite{sai-lin, sai-unif, ter-hodge, yos-prim},
which is a crucial structure in his theory
of primitive forms~\cite{sai-prim}.
In particular, the inverse operator of the
so-called \emph{primitive derivation} $\nabla_D$ describes the filtration as
\begin{gather*}
D(\A, 2m+1)^W=\big(\nabla_D^{-1}\big)^mD(\A)^W.
\end{gather*}
As indicated by Misha Feigin (see forthcoming paper~\cite{aefy} for details),
these spaces are also isomorphic
to the isotypic component of the spaces of $m$-quasiinvariants, which
were introduced in the study of the Calogero--Moser system
\cite{bm, cha-ves, fei-ves}.

Around 2000, Terao proved that the module $D(\A, m)$ is an
$S$-free module using these structures~\cite{ter-mult}.
Terao's results on the freeness of $D(\A, m)$ opened new perspectives
between the primitive derivation and enumerative combinatorics of
Catalan/Shi arrangements.

Catalan and Shi arrangements are classes of finite truncations of
affine Weyl arrangements for root systems.
The terminology ``Catalan arrangement''
is explained by the fact that the number of chambers in the fundamental
region of a type $A$ root system is equal to the Catalan number~\cite{ps-def}.
The Shi arrangement was introduced by J.-Y.~Shi in~\cite{shi-kl}
in the study of affine Weyl groups.
In 1996, Edelman--Reiner~\cite{ede-rei} posed a conjecture concerning the
freeness of cones of Catalan and Shi arrangements for root systems.
This conjecture was proved for type $A$ root system
by Edelman--Reiner~\cite{ede-rei} and
Athanasiadis~\cite{ath-free}, and later for all root systems by~\cite{yos-char}.
The~freeness of $D(\A, m)$ played crucial role in the proof of
\cite{yos-char} because $D(\A, m)$ can be regarded as the ``leading terms''
of the logarithmic vector fields for Catalan and Shi arrangements.
In the present paper we focus on the construction of explicit bases for
these modules.

\subsection{Constructions of explicit bases}

In this section, we introduce Catalan and Shi arrangements (of type $A_{\ell-1}$). We~define $m$-Catalan arrangement $\Cat_\ell(m)$ as
\begin{gather*}
\prod_{\substack{1\leq i<j\leq \ell\\ -m\leq k\leq m}}(x_i-x_j-k)=0,
\end{gather*}
and $m$-Shi arrangement $\Shi_\ell(m)$ by
\begin{gather*}
\prod_{1-m\leq k\leq m}\prod_{1\leq i<j\leq \ell}(x_i-x_j-k)=0.
\end{gather*}
We also denote $\Cat_\ell(0)$ by $\B_\ell$. The arrangement $\B_\ell$ is
defined as the polynomial $\prod_{1\leq i<j\leq \ell}(x_i-x_j)$ and
called the braid arrangement.

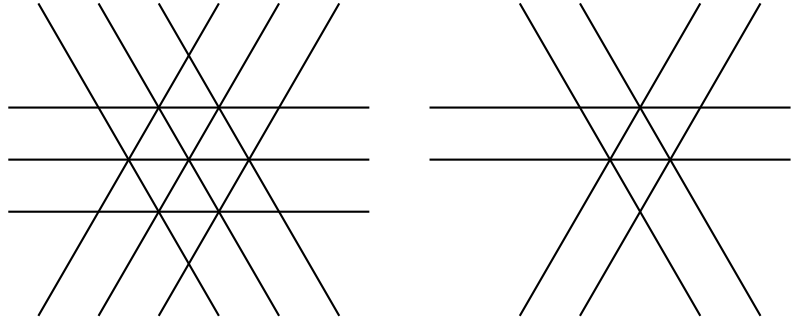
\begin{figure}[htbp]
\centering
\begin{tikzpicture}[scale=0.8]

\draw[thick, black] (0, 2.134) -- +(6,0);
\draw[thick, black] (0, 3) -- +(6,0);
\draw[thick, black] (0, 3.866) -- +(6,0);

\draw[thick, black] (3, 3) -- +(60:3);
\draw[thick, black] (3, 3) -- +(240:3);

\draw[thick, black] (2, 3) -- +(60:3);
\draw[thick, black] (2, 3) -- +(240:3);

\draw[thick, black] (4, 3) -- +(60:3);
\draw[thick, black] (4, 3) -- +(240:3);

\draw[thick, black] (3, 3) -- +(300:3);
\draw[thick, black] (3, 3) -- +(120:3);

\draw[thick, black] (2, 3) -- +(300:3);
\draw[thick, black] (2, 3) -- +(120:3);

\draw[thick, black] (4, 3) -- +(300:3);
\draw[thick, black] (4, 3) -- +(120:3);

\draw[thick, black] (7, 3) -- +(6,0);
\draw[thick, black] (7, 3.866) -- +(6,0);

\draw[thick, black] (10, 3) -- +(60:3);
\draw[thick, black] (10, 3) -- +(240:3);

\draw[thick, black] (11, 3) -- +(60:3);
\draw[thick, black] (11, 3) -- +(240:3);

\draw[thick, black] (10, 3) -- +(300:3);
\draw[thick, black] (10, 3) -- +(120:3);

\draw[thick, black] (11, 3) -- +(300:3);
\draw[thick, black] (11, 3) -- +(120:3);

\end{tikzpicture}
\caption{$\Cat_3(1)$ and $\Shi_3(1)$ (the intersections with the plane
$x_1+x_2+x_3=0$ are drawn).}
\end{figure}

The cones $c\Cat_\ell(m)$ and $c\Shi_\ell(m)$ are defined by the homogeneous
polynomials
\begin{gather*}
z\!\!\! \prod_{\substack{1\leq i<j\leq \ell\\ -m\leq k\leq m}}(x_i-x_j-kz)=0,
\end{gather*}
and 
\begin{gather*}
z\!\!\! \prod_{1-m\leq k\leq m}\prod_{1\leq i<j\leq \ell}(x_i-x_j-kz)=0,
\end{gather*}
respectively.
As we have already noted, the modules
$D(c\Cat_\ell(m))$ and $D(c\Shi_\ell(m))$ are free. The exponents, that
is, the degrees of the homogeneous bases of the modules, are as follows
\begin{gather*}
\exp(c\Cat_\ell(m))=\{0, 1, m\ell+1, m\ell+2, \dots, m\ell+\ell-1\},
\\
\exp(c\Shi_\ell(m))=\{0, 1, \underbrace{m\ell, m\ell, \dots, m\ell}_{\ell-1}\}.
\end{gather*}
We note that explicit bases were not constructed in the known proofs.
Indeed, the proofs by Edelman--Reiner~\cite{ede-rei} and
Athanasiadis~\cite{ath-free} used Terao's addition--deletion theorem
of freeness \cite[Theorem 4.51]{ot}, and that in~\cite{yos-char}
used cohomological arguments to guarantee the existence
of~glo\-bal sections of certain coherent sheaves associated with
the graded module $D(\A)$.
Since then, a~number of efforts have been made to construct explicit bases for
$D(c\Cat_\ell(m))$ and~$D(c\Shi_\ell(m))$.
First, in~\cite{su-te}, a basis for $D(c\Shi_\ell(1))$ was constructed
using the Bernoulli polynomial. Subsequently, in~\cite{gpt} and~\cite{su-bc},
similar bases were constructed for root systems of~type $B$, $C$, and $D$.
Note that these works are for Shi arrangements with $m=1$.
Catalan arran\-gements and Shi arrangements with $m>1$ have not been covered.
For larger $m$, the type
$A_2$ was the only known case. Namely, explicit bases were constructed
for $c\Cat_3(m)$ and~$c\Shi_3(m)$, $m\geq 1$, in~\cite{abe-suy}.

Our purpose in the present paper is to construct an explicit basis
for $D(c\Cat_\ell(m))$ and $D(c\Shi_\ell(m))$, for all $\ell\geq 2$ and $m\geq 1$.
The paper is organized as follows.
The starting point of our study is Bandlow--Musiker's integral expression
\cite{bm} (which goes back to Felder--Veselov's integral expression~\cite{fel-ves})
for a basis of the space of quasiinvariants introduced in
\cite{cha-ves, fei-ves}. Misha Feigin~\cite{feigin} communicated to us
that Bandlow--Musiker's formula provides a basis for the multiarrangement
$D(\B_\ell, 2m+1)$.
More precisely, for appropriate choices
of $k$, the integral expressions
\begin{gather}\label{eq:introbm}
\sum_{i, j=1}^\ell
\bigg(\int_{x_i}^{x_j}t^k\bigg(\prod_{p=1}^\ell(t-x_p)\bigg)^m {\rm d}t\bigg)\partial_i
\end{gather}
provide a basis for $D(\B_\ell, 2m+1)$. We~discuss these facts
in addition to a basis for $D(\B_\ell, 2m)$ in~Section~\ref{sec:bm}.

After introducing the notion of ``discrete integrals'' in Section~\ref{sec:discreteint},
we present the main results in Section~\ref{sec:main}, that is, we prove the
following: a basis for $D(c\Cat_\ell(m))$ is obtained from~(\ref{eq:introbm}) by~simply replacing the integration ``$\int_a^b {\rm d}t$''
with the discrete integration ``$\sint_a^b\Delta t$'' as follows
\begin{gather*}
\sum_{i, j=1}^\ell
\bigg(\sint_{x_i}^{x_j}t^k\bigg(\prod_{p=1}^\ell(t-x_p)\bigg)^{\underline{m}} \Delta t\bigg)\partial_i.
\end{gather*}
(To be precise, we need to homogenize the above polynomial vector field,
see Sections~\ref{sec:discreteint} and~\ref{sec:main} for notations and details.)
We also provide a basis for $D(c\Shi_\ell(m))$.

\section{Bandlow--Musiker's expression}\label{sec:bm}

Recall that $\B_\ell$ denotes the braid arrangement defined by
$\prod_{1\leq i<j\leq\ell}(x_i-x_j)$. The symmetric group
$W=\mathfrak{S}_\ell$ naturally acts on $\B_\ell$ by the permutation of
coordinates.
In this section, we~construct a basis for $D(\B_\ell, m)$, $m\geq 1$.
Note that because the vector field
\begin{gather*}
\theta_0:=\partial_1+\partial_2+\dots+\partial_\ell
\end{gather*}
annihilates the linear form $x_i-x_j$, we have $\theta_0\in D(\B_\ell, m)$
for any $m$.

We also set
$g(t):=(t-x_1)(t-x_2)\cdots(t-x_\ell)\in \C[t, x_1, \dots, x_\ell]$.
Following Bandlow--Musiker~\cite{bm} and Feigin~\cite{feigin},
for $m, k\geq 0$, we introduce the following vector field
\begin{gather}
\label{eq:eta}
\eta_k^m=\sum_{i=1}^\ell\bigg(\sum_{j=1}^\ell\int_{x_i}^{x_j}t^kg(t)^m {\rm d}t\bigg)\partial_i.
\end{gather}
\begin{Proposition}[\cite{bm, feigin}]\label{prop:bm}
The vector fields
$\eta_0^m, \eta_1^m, \dots, \eta_{\ell-2}^m, \theta_0$ form a
basis for $D(\B_\ell, 2m+1)^W$ as an $S^W$-module.
\end{Proposition}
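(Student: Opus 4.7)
The plan is to verify three properties: $W$-invariance of each $\eta_k^m$; the multiplicity condition $\eta_k^m(\alpha_H)\in(\alpha_H^{2m+1})$ for every hyperplane $H=\{x_i=x_j\}$ of $\B_\ell$; and that $\theta_0,\eta_0^m,\ldots,\eta_{\ell-2}^m$ form an $S$-basis of $D(\B_\ell,2m+1)$. Once the last is established, the fact that all these vectors are $W$-invariant automatically promotes them to an $S^W$-basis of $D(\B_\ell,2m+1)^W$: any $W$-invariant $\xi$ has a unique expansion $\sum f_i\delta_i$ in the $S$-basis, and applying $\sigma\in W$ forces each $f_i$ to be $W$-invariant.

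The $W$-invariance of $\eta_k^m$ is immediate from symmetry: the double sum ranges over all pairs $(i,j)$ and the integrand $t^k g(t)^m$ is fixed by $\mathfrak{S}_\ell$ acting on the $x_p$'s (with $t$ inert). For the multiplicity condition, applying $\eta_k^m$ to $x_i-x_j$ telescopes to
\begin{gather*}
\eta_k^m(x_i-x_j)=\ell\int_{x_i}^{x_j} t^k g(t)^m\,dt.
\end{gather*}
Since $g(t)^m$ carries the factor $(t-x_i)^m(t-x_j)^m$, the substitution $t=x_i+s(x_j-x_i)$ with $s\in[0,1]$ explicitly factors out $(x_j-x_i)^{2m+1}$ from this pair of linear terms together with the Jacobian, leaving an integrand that is polynomial in $s$ with coefficients in $S$.

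For the basis property, I would apply Saito's criterion to the $\ell\times\ell$ coefficient matrix $M$ of $(\theta_0,\eta_0^m,\ldots,\eta_{\ell-2}^m)$ with respect to $(\partial_1,\ldots,\partial_\ell)$. The multiplicity calculation, together with the standard column-subtraction trick (subtract column $j$ from column $i$ to reveal that every entry in the new column is divisible by $(x_i-x_j)^{2m+1}$, and use pairwise coprimality of these factors), forces $\prod_{i<j}(x_i-x_j)^{2m+1}$ to divide $\det M$. A homogeneous degree count yields $\deg\det M=\sum_{k=0}^{\ell-2}(m\ell+k+1)=\frac{\ell(\ell-1)(2m+1)}{2}$, matching the degree of the product. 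Hence $\det M=c\cdot\prod_{i<j}(x_i-x_j)^{2m+1}$ for some $c\in\C$.

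Showing $c\ne0$ is the main obstacle. The cleanest approach is induction on $m$. At $m=0$, direct evaluation of the antiderivatives gives $\eta_k^0=\frac{1}{k+1}(p_{k+1}\theta_0-\ell\rho_{k+1})$ with $p_{k+1}=\sum_i x_i^{k+1}$ and $\rho_{k+1}=\sum_i x_i^{k+1}\partial_i$; a triangular change of basis against the classical $S^W$-basis $\{\theta_0,\rho_1,\dots,\rho_{\ell-1}\}$ of $D(\B_\ell)^W$ settles the base case. For the inductive step, the natural move is to identify the primitive derivation $\nabla_D$ (which, for type $A$, is differentiation in the flat coordinate dual to the top-degree flat invariant) and verify a relation of the form $\nabla_D\eta_k^m=c_{m,k}\eta_k^{m-1}$ with $c_{m,k}\ne0$; since $\nabla_D^{-1}$ induces an isomorphism between consecutive pieces of the Hodge filtration, this transports a basis at step $m-1$ to one at step $m$. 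Alternatively, one could invoke Feigin's identification of $D(\B_\ell,2m+1)^W$ with the appropriate isotypic component of the space of $m$-quasi-invariants, where Bandlow--Musiker have already shown directly that these integral vectors constitute a basis.
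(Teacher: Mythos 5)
Your proposal is correct and follows essentially the same route as the paper's direct proof of Corollary~\ref{cor:multibasis}: membership in $D(\B_\ell,2m+1)$ via the substitution that extracts $(x_i-x_j)^{2m+1}$ from the definite integral, Saito's criterion at $m=0$, and the relation $\nabla_D\eta_k^m=m\,\eta_k^{m-1}$ combined with the Hodge-filtration isomorphism $D(\B_\ell\cap H_0,2m+1)^W=\nabla_D^{-m}D(\B_\ell\cap H_0,1)^W$ to transport the basis to general $m$. The only (harmless) difference is one of logical direction: you deduce the $S^W$-statement from the $S$-basis property, whereas the paper quotes the $S^W$-statement from Bandlow--Musiker and Feigin and obtains the $S$-statement from it via Terao's invariant-basis theorem and a degree comparison.
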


Recall that Terao~\cite{ter-mult} proved that there exists a $W$-invariant basis
for $D(\B_\ell, 2m+1)$. Terao's invariant basis generates a submodule
of $D(\B_\ell, 2m+1)^W$ over $S^W$. However, these must coincide since
the degrees of Terao's basis are equal to those of the above basis.
Thus we have the following.
\begin{Corollary}
\label{cor:multibasis}
The vector fields
$\eta_0^m, \eta_1^m, \dots, \eta_{\ell-2}^m, \theta_0$ form a
basis of $D(\B_\ell, 2m+1)$ as an~$S$-module.
\end{Corollary}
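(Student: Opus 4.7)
The plan is to combine Proposition~\ref{prop:bm} with Terao's theorem~\cite{ter-mult} that $D(\B_\ell, 2m+1)$ admits a $W$-invariant $S$-basis $\delta_0, \ldots, \delta_{\ell-1}$, and to pass back and forth between $S$-bases of the full module and $S^W$-bases of its $W$-invariant part.

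First I would observe that any $W$-invariant $S$-basis $\{\delta_i\}$ of $D(\B_\ell, 2m+1)$ is automatically an $S^W$-basis of $D(\B_\ell, 2m+1)^W$. Indeed, if $\delta \in D(\B_\ell, 2m+1)^W$ and $\delta = \sum_i f_i \delta_i$ with $f_i \in S$, then applying any $w \in W$ and invoking uniqueness of the $S$-expansion forces each $f_i \in S^W$; $S^W$-linear independence of the $\delta_i$ is inherited from their $S$-linear independence. In particular, the multiset of homogeneous degrees of $\{\delta_i\}$ coincides with the multiset of exponents of $D(\B_\ell, 2m+1)^W$ as a graded $S^W$-module.

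By Proposition~\ref{prop:bm}, the vector fields $\theta_0, \eta_0^m, \ldots, \eta_{\ell-2}^m$ also form an $S^W$-basis of $D(\B_\ell, 2m+1)^W$, so they have the same multiset of homogeneous degrees as $\{\delta_i\}$. Ordering the two bases so that the degrees match term by term, the change-of-basis matrix $M$ has entries in $S^W$ with $\det M$ homogeneous of total degree $0$, hence a nonzero scalar; so $M$ is invertible over $S^W$, and \emph{a fortiori} over $S$. It follows that $\theta_0, \eta_0^m, \ldots, \eta_{\ell-2}^m$ generate the same $S$-submodule of $\Der_S$ as $\{\delta_i\}$, namely $D(\B_\ell, 2m+1)$, and are $S$-linearly independent by the $S$-invertibility of $M$. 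The only substantive ingredient is Terao's theorem producing some $W$-invariant $S$-basis; the descent-and-ascent argument is formal, so I do not anticipate any genuine obstacle once Proposition~\ref{prop:bm} is in hand.
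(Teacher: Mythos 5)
Your argument is correct and is essentially the paper's own justification of the corollary: the authors likewise invoke Terao's theorem on the existence of a $W$-invariant basis and conclude by matching degrees with the $S^W$-basis from Proposition~\ref{prop:bm}, with your descent-and-ascent step merely making the formal part explicit. (The paper additionally supplies an independent ``direct proof'' via the primitive derivation, but that is presented as a supplement, not as the route you were asked to reproduce.)
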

Here we give a direct proof of Corollary~\ref{cor:multibasis} in order to see
the relationship between integral expression~(\ref{eq:eta}) and
the primitive derivation.
First, we prove $\eta_k^m\in D(\B_\ell, 2m+1)$. Since $\eta_k^m$ is
$W$-symmetric, it is sufficient to show that
\begin{gather*}
\eta_k^m(x_1-x_2)=\ell\int_{x_1}^{x_2}t^kg(t)^m{\rm d}t
\end{gather*}
is divisible by $(x_1-x_2)^{2m+1}$. This can be checked by
the change of variables $t'=t-x_1$.
Indeed, since the integrand is divisible by $(t-x_1)^m(t-x_2)^m$, after the
change of variable, it is divisible by $(t')^m(t'-(x_2-x_1))^m$. Therefore,
the definite integral $\int_0^{x_2-x_1}{\rm d}t'$ is divisible by $(x_2-x_1)^{2m+1}$.

\begin{Example}
When $\ell=2$, $m=1$, we obtain the following famous formula
\begin{gather*}
\int_{x_1}^{x_2}(t-x_1)(t-x_2)\,{\rm d}t=\frac{(x_1-x_2)^3}{6}.
\end{gather*}
\end{Example}

Next we describe the action of the primitive derivation on the vector field $\eta_k^m$.
Let $\nabla$ denote the integrable connection with flat sections
$\partial_1, \dots, \partial_\ell$. More explicitly, for polynomial
vector fields $\delta$ and $\eta=\sum_{i=1}^\ell f_i\partial_i$, we define
\begin{gather*}
\nabla_\delta\eta=\sum_{i=1}^\ell (\delta f_i)\,\partial_i.
\end{gather*}
Let $P_i$ denote the coefficient of $t^{\ell-i}$ in $g(t)$, that is,
$g(t)=t^\ell+P_1t^{\ell-1}+\dots+P_\ell$. Then $P_1, \dots, P_\ell
\in\Z[x_1, \dots, x_\ell]$ are elementary symmetric functions (up to the sign),
and satisfy $\C[x_1, \dots, x_\ell]^W=\C[P_1, \dots, P_\ell]$. Therefore,
we can regard $P_1, \dots, P_\ell$ as a system of coordinates on the
quotient space $\C^\ell/W$. The vector field $D=\frac{\partial}{\partial P_\ell}$
is called the primitive derivation~\cite{sai-lin, sai-unif}. The lift of $D$ by
the natural projection $\pi\colon \C^\ell\longrightarrow\C^\ell/W$ is
expressed as
\begin{gather*}
D=
\frac{1}{Q}\det
\begin{pmatrix}
\frac{\partial P_1}{\partial x_1}&
\frac{\partial P_2}{\partial x_1}&
\dots &
\frac{\partial P_{\ell-1}}{\partial x_1}&
\frac{\partial}{\partial x_1}
\\[1ex]
\frac{\partial P_1}{\partial x_2}&
\frac{\partial P_2}{\partial x_2}&
\dots &
\frac{\partial P_{\ell-1}}{\partial x_2}&
\frac{\partial}{\partial x_2}
\\
\vdots&\vdots&\ddots&\vdots&\vdots
\\
\frac{\partial P_1}{\partial x_\ell}&
\frac{\partial P_2}{\partial x_\ell}&
\dots &
\frac{\partial P_{\ell-1}}{\partial x_\ell}&
\frac{\partial}{\partial x_\ell}
\end{pmatrix}\!,
\end{gather*}
where $Q=\det\big(\frac{\partial P_i}{\partial x_j}\big)$ is the Jacobian of the projection
$\pi$. For simplicity, we also denote by $D$ the lift of the primitive derivation
$\frac{\partial}{\partial P_\ell}$ . Note that other vector fields
$\frac{\partial}{\partial P_1}, \frac{\partial}{\partial P_2}, \dots,
\frac{\partial}{\partial P_{\ell-1}}$ also have similar expressions.
The definition~(\ref{eq:eta}) can be written as
\begin{gather*}
\eta_k^m=\sum_{i=1}^\ell\bigg(\sum_{j=1}^\ell
\int_{x_i}^{x_j}t^k\big(t^\ell+P_1t^{\ell-1}+\dots +P_\ell\big)^m {\rm d}t\bigg)\partial_i.
\end{gather*}
Then, if $m>0$, differentiation by $\frac{\partial}{\partial P_j}$ yields
(see Remark~\ref{rem:details} for details)
\begin{gather}
\label{eq:bibun}
\nabla_{\frac{\partial}{\partial P_j}}\eta_k^m=m\,\eta_{k+\ell-j}^{m-1}.
\end{gather}
When $m=0$, it is checked by Saito's criterion~\cite{ot,sai-log} that
the vector fields
$\eta_0^0, \eta_1^0, \dots, \eta_{\ell-2}^0, \theta_0$ form a
basis of $D(\B_\ell, 1)$.
For $m>0$, we have $\nabla_D^m\eta_k^m=m!\cdot\eta_k^0$.
Recall that
$D(\B_\ell\cap H_0$, $2m+1)^W=\nabla_D^{-m}D(\B_\ell\cap H_0, 1)^W$
\cite[Corollary~10]{yos-prim}. Therefore,
$\nabla_D^{-m}\eta_0^0, \dots, \nabla_D^{-m}\eta_{\ell-1}^0$ form a~basis of $D(\B_\ell\cap H_0, 2m+1)$, where $H_0$ is the hyperplane
$x_1+\dots + x_\ell=0$. Adding $\theta_0$, we~obtain a basis for
$D(\B_\ell, 2m+1)$.
This completes the proof of Corollary~\ref{cor:multibasis}.

Now we consider $D(\B_\ell, 2m)$. Let
$g_k(t):=\frac{g(t)}{(t-x_k)}=(t-x_1)\cdots \widehat{(t-x_k)}\cdots (t-x_\ell)$
for $1\leq k\leq \ell$.
For $m>0$ and $k=1, \dots, \ell$, we define the vector field $\sigma_k^m$ as
\begin{gather}
\label{eq:sigma1}
\sigma_k^m:=\sum_{i=1}^\ell\bigg(\sum_{j=1}^\ell\int_{x_i}^{x_j}g(t)^{m-1}g_k(t)\, {\rm d}t \bigg)\partial_i.
\end{gather}

\begin{Proposition}
\label{prop:simpleroots}
Vector fields
$\theta_0, \sigma_1^m-\sigma_2^m, \sigma_2^m-\sigma_3^m, \dots, \sigma_{\ell-1}^m-\sigma_\ell^m$ form a basis of $D(\B_\ell, 2m)$.
\end{Proposition}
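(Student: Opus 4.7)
My plan is to follow the three-step strategy of Corollary~\ref{cor:multibasis}: (i) check $\sigma_k^m\in D(\B_\ell,2m)$; (ii) establish $\sum_{k=1}^\ell\sigma_k^m=0$; (iii) apply Saito's criterion.

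For (i), I compute
\[
\sigma_k^m(x_a-x_b)=\ell\int_{x_a}^{x_b}g(t)^{m-1}g_k(t)\,{\rm d}t,
\]
whose integrand is $(t-x_k)^{m-1}\prod_{p\neq k}(t-x_p)^m$. The rescaling $t=x_a+s(x_b-x_a)$ puts the integrand times ${\rm d}t$ in the form $(x_b-x_a)^{2m}s^\alpha(s-1)^\beta P(s)\,{\rm d}s$ with $\alpha,\beta\geq m-1$, so $(x_b-x_a)^{2m}\mid\sigma_k^m(x_a-x_b)$ (with an extra factor $(x_b-x_a)$ coming out when $k\notin\{a,b\}$). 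Hence $\sigma_k^m\in D(\B_\ell,2m)$, and the same holds for the differences $\sigma_i^m-\sigma_{i+1}^m$ and for $\theta_0$. For (ii), $\sum_k g_k(t)=g'(t)$ gives $\sum_k g(t)^{m-1}g_k(t)=\tfrac{1}{m}(g(t)^m)'$, whose integral between $x_i$ and $x_j$ vanishes because $g(x_i)=g(x_j)=0$.

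For (iii), form the $\ell\times\ell$ matrix $M=(\delta_i(x_j))$ with rows $\delta_0=\theta_0$ and $\delta_i=\sigma_i^m-\sigma_{i+1}^m$ for $1\leq i\leq\ell-1$. By (i) and the usual Saito divisibility argument, $\prod_{a<b}(x_a-x_b)^{2m}\mid\det M$; the total row-degree $(\ell-1)m\ell=2m\binom{\ell}{2}$ matches, so $\det M=c\prod_{a<b}(x_a-x_b)^{2m}$ for some $c\in\C$. Terao's freeness theorem gives $D(\B_\ell,2m)$ as an $S$-free module of rank $\ell$ with exponents $\{0,m\ell,\dots,m\ell\}$, so our $\ell$ elements form a basis iff $c\neq 0$.

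The main obstacle is the nonvanishing $c\neq 0$. I would argue via $\mathfrak{S}_\ell$-equivariance: the $\sigma_k^m$'s realize the permutation representation (since $w\cdot\sigma_k^m=\sigma_{w(k)}^m$), the trivial summand is killed by (ii), and the differences $\sigma_i^m-\sigma_{i+1}^m$ span the irreducible standard $(\ell-1)$-dimensional summand. The exact calculation in (i) produces $\delta_i(x_i-x_{i+1})$ as a nonzero polynomial (divisible by precisely $(x_i-x_{i+1})^{2m}$), so $\delta_i\notin S\theta_0$; by Schur's lemma the standard summand therefore injects into the degree-$m\ell$ part of $D(\B_\ell,2m)/S\theta_0$, which has $\C$-dimension exactly $\ell-1$ by freeness. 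The images thus form a $\C$-basis, so the $\delta_i$'s generate $D(\B_\ell,2m)/S\theta_0$ over $S$; adjoining $\theta_0$ yields $\ell$ generators of the rank-$\ell$ free module $D(\B_\ell,2m)$, i.e., a basis. A shorter computational alternative is to specialize $x_k=\zeta^{k-1}$ with $\zeta$ a primitive $\ell$-th root of unity, so that $g(t)=t^\ell-1$, and verify $\det M\neq 0$ at this point directly.
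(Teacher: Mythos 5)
Your argument is correct in substance but takes a genuinely different route from the paper's. The paper proves Proposition~\ref{prop:simpleroots} in three lines: since $\partial g(t)/\partial x_k=-g_k(t)$, one has $\nabla_{\partial_k}\eta_0^m=-m\sigma_k^m$, so $\sigma_i^m-\sigma_{i+1}^m=-\tfrac{1}{m}\nabla_{(\partial_i-\partial_{i+1})}\eta_0^m$, and then \cite[Theorem~7]{yos-prim} says precisely that applying $\nabla$ to $\eta_0^m$ along a basis of $H_0$ yields, together with $\theta_0$, a basis of $D(\B_\ell,2m)$. No divisibility check, determinant, or independence argument is needed; the result is an instance of the primitive-derivation machinery, and this also explains \emph{why} $\sigma_k^m$ is the right object. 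You instead verify $\sigma_k^m\in D(\B_\ell,2m)$ directly (your substitution is exactly right: order $2m+1$ in $(x_a-x_b)$ when $k\notin\{a,b\}$, order $\geq 2m$ otherwise), use $\sum_k g_k=g'$ to get $\sum_k\sigma_k^m=0$, and settle nondegeneracy by Schur's lemma against the exponents $(0,m\ell,\dots,m\ell)$ supplied by Terao's theorem \cite{ter-mult}, finishing with graded Nakayama. That is a sound and self-contained alternative (modulo Terao's freeness, which the paper also presupposes), and it would survive in settings where the link to $\eta_0^m$ and to \cite{yos-prim} is unavailable; the price is that it is longer and purely verificational.

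One step needs to be actually carried out rather than asserted: you claim $\delta_i(x_i-x_{i+1})$ is nonzero and divisible by \emph{precisely} $(x_i-x_{i+1})^{2m}$ ``by the exact calculation in (i)'', but (i) only establishes divisibility, not nonvanishing of the leading coefficient, and everything (injectivity of the standard summand, hence $c\neq 0$) hinges on this. The missing line is: after your substitution, the coefficient of $(x_{i+1}-x_i)^{2m}\prod_{p\neq i,i+1}(x_i-x_p)^m$ in $\delta_i(x_i-x_{i+1})$ equals $\ell\big(\int_0^1 s^{m-1}(s-1)^m\,{\rm d}s-\int_0^1 s^{m}(s-1)^{m-1}\,{\rm d}s\big)=2(-1)^m\ell\, B(m,m+1)\neq 0$. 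Your alternative suggestion (specializing $x_k=\zeta^{k-1}$ so that $g(t)=t^\ell-1$ and checking $\det M\neq0$ there) is likewise only a plan, not a verification. With the Beta-integral line added, the proof is complete.
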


\begin{proof}
We first note that $\frac{\partial g(t)}{\partial x_k}=-g_k(t)$. Hence we have
\begin{gather}
\label{eq:differentiate2}
\nabla_{\partial_k}\eta_0^m=-m\sigma_k^m,
\end{gather}
and
\begin{gather*}
\nabla_{(\partial_i-\partial_{i+1})}\eta_0^m=-m\big(\sigma_i^m-\sigma_{i+1}^m\big).
\end{gather*}
Since $\partial_1-\partial_2, \dots, \partial_{\ell-1}-\partial_\ell$
form a basis of $H_0$, by~\cite[Theorem 7]{yos-prim},
$\theta_0,\! \nabla_{\!(\partial_1-\partial_2)}\eta_0^m, \dots,\!
\nabla_{\!(\partial_{\ell-1}-\partial_\ell)}\eta_0^m$
form a basis of $D(\B_\ell, 2m)$.
\end{proof}

\begin{Remark}
\label{rem:details}
Recall that for certain functions $a(x)$, $b(x)$, $f(x, t)$, we have
\begin{gather*}
\frac{{\rm d}}{{\rm d}x}\int_{a(x)}^{b(x)}f(x, t)\,{\rm d}t=
\frac{{\rm d}b(x)}{{\rm d}x}f(x, b(x))-\frac{{\rm d}a(x)}{{\rm d}x}f(x, a(x))+
\int_{a(x)}^{b(x)}\frac{\partial f(x, t)}{\partial x}{\rm d}t.
\end{gather*}
In our setting, since the integrand $t^kg(t)^m$ vanishes at
$x_i$, the first two terms do not contribute. Thus we have
the equations~(\ref{eq:bibun}) and~(\ref{eq:differentiate2}).
\end{Remark}

\section{Discrete integrals}
\label{sec:discreteint}

In this section we only consider polynomial functions.
For a function $f(t)$, we define the difference operator $\Delta$ as
$\Delta f(t)=f(t+1)-f(t)$. When $\Delta F(t)=f(t)$,
$F(t)$ is called an indefinite summation (or antidifference)
of $f(t)$, and denoted by
\begin{gather*}
F(t)=\sint f(t)\Delta t.
\end{gather*}
Let $F(t)$ be an indefinite summation of $f(t)$. Then
we define the definite summation as
\begin{gather*}
\sint_a^b f(t)\Delta t=F(b)-F(a).
\end{gather*}
Obviously we have the following.
\begin{gather*}
\sint_b^af(t)\Delta t=-\sint_a^bf(t)\Delta t,
\\
\sint_a^cf(t)\Delta t=\sint_a^bf(t)\Delta t + \sint_b^cf(t)\Delta t.
\end{gather*}
Note that if $b-a=n$ is a positive integer, the definite summation
is nothing but the finite sum
\begin{gather}
\label{sumformula}
\sint_a^b f(t)\Delta t=f(a)+f(a+1)+\cdots+f(b-1).
\end{gather}

\begin{Example}
Recall that the Bernoulli polynomial $B_n(t)$ is a monic polynomial
with rational coefficients defined by
\begin{gather}
\label{eq:Bpoly}
\sum_{n=0}^\infty
\frac{B_n(x)}{n !}t^n
=\frac{t{\rm e}^{xt}}{{\rm e}^t-1},
\end{gather}
$\big($e.g., $B_0(x)=1$, $B_1(x)=x-\frac{1}{2}$, $B_2(x)=x^2-x+\frac{1}{6}$,
$B_3(x)=x^3-\frac{3}{2}x^2+\frac{1}{2}x$,
$B_4(x)=x^4-2x^3+x^2-\frac{1}{30}$, $\ldots\big)$.
By applying the difference operator with respect to the variable $x$ to
the equation~(\ref{eq:Bpoly}), we have
\begin{gather*}
\sum_{n=0}^\infty
\frac{\Delta B_n(x)}{n !}t^n
=t{\rm e}^{xt}=\sum_{n=1}^\infty\frac{x^{n-1}}{(n-1)!}t^n.
\end{gather*}
Thus we have $\Delta B_n(t)=nt^{n-1}$.
Therefore, the monomial $x^n$ has an indefinite summation $\frac{B_{n+1}(x)}{n+1}$.
Furthermore, an arbitrary polynomial $f(x)=\sum_i a_ix^i$ has an indefinite summation
$\sum_ia_i\frac{B_{i+1}(x)}{i+1}$.
\end{Example}
The leading part of a definite summation is equal to a definite integral.
More precisely, we~have the following.

\begin{Proposition}\label{prop:leading}
Let $f(x_0, x_1, \dots, x_n)\in\C[x_0, \dots, x_n]$ be a homogeneous
polynomial of deg\-ree~$d$. Let
\begin{gather*}
F(y_1, y_2, x_1, \dots, x_n):=
\sint_{y_1}^{y_2}f(x_0, x_1, \dots, x_n)\Delta x_0.
\end{gather*}
Then $F(y_1, y_2, x_1, \dots, x_n)$ is a $($not necessarily homogeneous$)$
polynomial of degree $d+1$ in $y_1, y_2, x_1, \dots, x_n$
whose highest degree part is
\begin{gather*}
\lim_{z\to 0} z^{d+1}F\bigg(\frac{y_1}{z},\frac{y_2}{z},\frac{x_1}{z}, \dots,\frac{x_n}{z}\bigg)
=\int_{y_1}^{y_2}f(x_0, x_1, \dots, x_n)\, {\rm d} x_0.
\end{gather*}
\end{Proposition}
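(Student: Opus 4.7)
The plan is to exploit $\C$-linearity of both the discrete summation $\sint_{y_1}^{y_2}(\cdot)\Delta x_0$ and the ordinary integral $\int_{y_1}^{y_2}(\cdot)\,{\rm d}x_0$ to reduce to the case of a single monomial $f = x_0^k x_1^{a_1}\cdots x_n^{a_n}$ with $k + a_1 + \cdots + a_n = d$. Both operations treat the variables $x_1, \dots, x_n$ as constants, so the factor $x_1^{a_1}\cdots x_n^{a_n}$ pulls outside in each computation, and the proposition reduces to a one-variable assertion about $x_0^k$.

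Next I would invoke the Bernoulli polynomial formula established in the example preceding the proposition: $\sint x_0^k\,\Delta x_0 = B_{k+1}(x_0)/(k+1)$, where $B_{k+1}$ is \emph{monic} of degree $k+1$. Hence
\begin{gather*}
\sint_{y_1}^{y_2} x_0^k\,\Delta x_0 \;=\; \frac{B_{k+1}(y_2) - B_{k+1}(y_1)}{k+1}
\end{gather*}
is a polynomial in $y_1, y_2$ of total degree $k+1$, and its homogeneous component of top degree $k+1$ equals $(y_2^{k+1} - y_1^{k+1})/(k+1)$, since every contribution from lower-order coefficients of $B_{k+1}$ produces terms of strictly smaller degree in $y_1, y_2$. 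This top part is precisely $\int_{y_1}^{y_2} x_0^k\,{\rm d}x_0$. Multiplying by $x_1^{a_1}\cdots x_n^{a_n}$, which carries degree $d-k$, yields a polynomial of total degree $(k+1)+(d-k)=d+1$ whose degree-$(d+1)$ homogeneous component equals $\int_{y_1}^{y_2} x_0^k x_1^{a_1}\cdots x_n^{a_n}\,{\rm d}x_0$. Summing over the monomials of an arbitrary homogeneous $f$ of degree $d$ shows that $F$ has total degree $d+1$ and that the sum of its degree-$(d+1)$ components equals $\int_{y_1}^{y_2} f\,{\rm d}x_0$.

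The stated limit identity is then just the standard device for extracting the top homogeneous part of a polynomial. For any polynomial $G(u_1, \dots, u_N)$ of total degree $\leq r$, each monomial $u_1^{\alpha_1}\cdots u_N^{\alpha_N}$ contributes $z^{r-|\alpha|}u_1^{\alpha_1}\cdots u_N^{\alpha_N}$ to $z^r G(u_1/z, \dots, u_N/z)$; the terms with $|\alpha|<r$ carry positive powers of $z$ and die in the limit, while those with $|\alpha|=r$ survive. Applied to $G=F$ with $r=d+1$, this recovers exactly the degree-$(d+1)$ component computed above.

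I do not foresee a genuine obstacle: the whole content of the proposition is the monicness of $B_{k+1}$, which guarantees that the leading term $x_0^{k+1}/(k+1)$ of the antidifference survives without contamination from lower Bernoulli coefficients and matches the continuous antiderivative of $x_0^k$. The homogenization trick is pure bookkeeping and requires no separate argument.
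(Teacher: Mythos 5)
Your proof is correct and follows essentially the same route as the paper, which simply cites the fact that the leading term of $\frac{B_{n+1}(t)}{n+1}$ is $\frac{t^{n+1}}{n+1}$, an indefinite integral of $t^n$, and leaves the rest as ``straightforward.'' You have merely spelled out the linearity reduction to monomials and the homogenization bookkeeping that the paper suppresses; there is no substantive difference in approach.
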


\begin{proof}
This is straightforward from the fact that the leading term of
$\frac{B_{n+1}(t)}{n+1}$ is $\frac{t^{n+1}}{n+1}$,
which is an indefinite integral of $t^n$.
\end{proof}

The following is a discrete analogue of the power.
Let $n>0$ be a positive integer. We~define the falling power
$f(t)^{\underline{n}}$ as
\begin{gather*}
f(t)^{\underline{n}}=f(t)f(t-1)\cdots f(t-n+1).
\end{gather*}

\section{Main results}\label{sec:main}

\subsection{A basis for the Catalan arrangement}

Let $\delta=\sum_{i=1}^\ell f_i(x_1, \dots, x_\ell)\,\partial_i$ be a polynomial
vector field ($f_1, \dots, f_\ell$ are not necessarily homogeneous).
Let $d:=\max\{\deg f_1, \dots, \deg f_\ell\}$. We~define the homogenization
of $\delta$ by
\begin{gather*}
\widetilde{\delta}=\sum_{i=1}^\ell z^d\, f_i\bigg(\frac{x_1}{z}, \dots, \frac{x_\ell}{z}\bigg)\partial_i.
\end{gather*}
Using the notion of discrete integrals, we define $\zeta_k^m$ as follows.
\begin{gather*}
\zeta_k^m=\sum_{i, j=1}^\ell\left(\sint_{x_i}^{x_j}t^kg(t)^{\underline{m}} \Delta t\right)
\partial_i.
\end{gather*}
Note that the definition of $\zeta_k^m$ is a discrete analogue of~(\ref{eq:eta}).
The next result shows that the homogenizations of these vector fields
form a basis for the Catalan arrangement.

\begin{Theorem}
\label{thm:cat}
$\widetilde{\zeta}_0^m, \widetilde{\zeta}_1^m, \dots, \widetilde{\zeta}_{\ell-2}^m,
\theta_0$ and
the Euler vector field $\theta_E:=z\partial_z+x_1\partial_1+\cdots+x_\ell\partial_\ell$
form a basis of $D(c\Cat_\ell(m))$.
\end{Theorem}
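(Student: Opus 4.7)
The plan is to apply Saito's criterion to the $(\ell+1)\times(\ell+1)$ coefficient matrix of the proposed spanning set, using Corollary~\ref{cor:multibasis} and Proposition~\ref{prop:leading} to pass from the continuous basis of $D(\B_\ell,2m+1)$ to the discrete one. The verification splits into two parts: (i) showing that each listed vector field lies in $D(c\Cat_\ell(m))$, and (ii) showing that the determinant of coefficients equals a nonzero scalar multiple of the defining polynomial $Q(c\Cat_\ell(m))=z\prod_{i<j,\,-m\le k\le m}(x_i-x_j-kz)$.

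For (i), $\theta_0$ and $\theta_E$ clearly belong to $D(c\Cat_\ell(m))$. Because $g(t)$ is symmetric in $x_1,\dots,x_\ell$, the vector field $\zeta_k^m$ is $\mathfrak{S}_\ell$-invariant, so it suffices to check that $\zeta_k^m(x_1-x_2)$ is divisible by $(x_1-x_2-K)$ for every $K\in\{-m,\dots,m\}$. A short computation, parallel to the continuous case, shows
\begin{gather*}
\zeta_k^m(x_1-x_2)=\ell\sint_{x_1}^{x_2}t^k g(t)^{\underline{m}}\,\Delta t.
\end{gather*}
The case $K=0$ is immediate. For $K\ne0$, I specialize $x_1=x_2+K$ and use the finite-sum formula~(\ref{sumformula}) to write the result as $\pm\sum_s (x_2\pm s)^k g(x_2\pm s)^{\underline{m}}$ over a range of length $|K|\le m$. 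The point is that when $x_1=x_2+K$, each factor $g(x_2+u)$ specializes to $u(u-K)\prod_{p\ge3}(x_2+u-x_p)$, so $g(x_2+s-r)^{\underline{m}}$ contains a factor vanishing whenever $s-r=0$ or $s-r=K$ for some $r\in\{0,\dots,m-1\}$. A case-check confirms that for the ranges of $s$ produced by the finite-sum expansion, such an $r$ always exists as long as $|K|\le m$. Homogenization preserves this divisibility, so $\widetilde{\zeta}_k^m\in D(c\Cat_\ell(m))$.

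For (ii), I expand the coefficient determinant along the $\partial_z$-column; only $\theta_E$ contributes, yielding $z\cdot\det M$, where $M$ is the $\ell\times\ell$ matrix whose rows are the $\partial_1,\dots,\partial_\ell$ coefficients of $\theta_0,\widetilde{\zeta}_0^m,\dots,\widetilde{\zeta}_{\ell-2}^m$. A~degree count, comparing $\sum\exp(c\Cat_\ell(m))$ with $\deg Q(c\Cat_\ell(m))$, matches, so part~(i) and Saito's divisibility statement force $\det M=c\prod_{i<j,k}(x_i-x_j-kz)$ for some constant $c$. To show $c\ne0$, I set $z=0$: by Proposition~\ref{prop:leading}, the leading (top-degree) part of the coefficients of $\widetilde{\zeta}_k^m$ coincides with the coefficients of $\eta_k^m$, so $\det M\big|_{z=0}$ equals the Saito determinant of $\theta_0,\eta_0^m,\dots,\eta_{\ell-2}^m$, which by Corollary~\ref{cor:multibasis} is a nonzero scalar multiple of $\prod_{i<j}(x_i-x_j)^{2m+1}$. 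Since $\prod_{i<j,k}(x_i-x_j-kz)\big|_{z=0}$ is precisely this polynomial, $c\ne 0$ and Saito's criterion concludes the proof.

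The main obstacle is the combinatorial book-keeping in~(i): one must carefully identify, for each $K\in\{-m,\dots,m\}$ and each summand index $s$ appearing after the finite-sum expansion, the specific index $r\in\{0,\dots,m-1\}$ for which the factor $g(x_2+s-r)$ of $g(x_2+s)^{\underline{m}}$ contains a vanishing linear factor under the substitution $x_1=x_2+K$. Once this bookkeeping is done the remainder of the argument is a clean application of Saito's criterion together with the already-established continuous case, so the heart of the theorem lies in exploiting the product structure of the falling power $g(t)^{\underline{m}}$ and the range condition $|K|\le m$.
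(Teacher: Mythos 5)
Your proposal is correct and matches the paper's proof in all essential points: the membership check reduces by symmetry to the vanishing of the finite sum $\sint_{x_2}^{x_2+p}t^kg(t)^{\underline{m}}\Delta t$, where for each summand some factor of the falling power $g(t)^{\underline{m}}$ vanishes, and the restriction to $z=0$ is identified with the Bandlow--Musiker basis $\eta_k^m$ via Proposition~\ref{prop:leading} and Corollary~\ref{cor:multibasis}. The only difference is packaging of the final step: the paper invokes Ziegler's restriction criterion, whereas you run Saito's criterion directly with a degree count and evaluate the determinant at $z=0$; the two are standard and interchangeable here.
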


\begin{proof}
Note that the restriction of $c\Cat_\ell(m)$ to the hyperplane $z=0$ is
equal to the braid arrangement $\B_\ell$ (with multiplicity $2m+1$).
In view of Ziegler's characterization of freeness~\cite{zie} (see also~\cite[Corollary~1.35]{yos-survey}), it is sufficient to prove the
following:
\begin{itemize}\itemsep=0pt
\item[$(a)$]
$\widetilde{\zeta}_0^m, \widetilde{\zeta}_1^m, \dots,
\widetilde{\zeta}_{\ell-2}^m\in D(c\Cat_\ell(m))$.
\item[$(b)$]
The restrictions
$\widetilde{\zeta}_0^m|_{z=0}, \widetilde{\zeta}_1^m|_{z=0}, \dots,
\widetilde{\zeta}_{\ell-2}^m|_{z=0}, \theta_0|_{z=0}$
form a basis of $D(\B_\ell, 2m+1)$.
\end{itemize}
First we prove $(b)$. By Proposition~\ref{prop:leading}, we have
\begin{gather*}
\widetilde{\zeta}_0^m|_{z=0}=\eta_k^m.
\end{gather*}
Now $(b)$ is obtained from Proposition~\ref{prop:bm}.
To prove $(a)$, since $\zeta_k^m$ is symmetric, it is sufficient to show that
$\zeta_k^m(x_1-x_2)$ is divisible by $(x_1-x_2-p)$ for $-m\leq p\leq m$.
If $p=0$, it is clear that
\begin{gather*}
\zeta_k^m(x_1-x_2)=
-\ell\sint_{x_2}^{x_1}t^kg(t)^{\underline{m}} \Delta t
\end{gather*}
is divisible by $x_1-x_2$. Now we assume $p>0$. To prove divisibility
by $(x_i-x_j-p)$, we need to show that
\begin{gather}
\label{eq:need}
\sint_{x_2}^{x_2+p}t^kg(t)^{\underline{m}} \Delta t=0.
\end{gather}
Actually, using~(\ref{sumformula}), the left-hand side is equal to
\begin{gather*}
\sum_{t=x_2}^{x_2+p-1}t^kg(t)g(t-1)\cdots g(t-m+1).
\end{gather*}
For each $t=x_2+i$ ($0\leq i\leq p-1$), clearly we have $g(t-i)=0$.
Thus~(\ref{eq:need}) holds.
In the case $-m\leq p<0$, we need to consider
$\sint_{x_1}^{x_2}\Delta t=\sint_{x_1}^{x_1-p}\Delta t$ instead of~(\ref{eq:need}).
The remaining arguments are similar.
\end{proof}

\subsection{A basis for the Shi arrangement}

\looseness=-1
To construct a basis for the Shi arrangement, we need the following.
For $0\leq k\leq\ell$ and $m\geq 0$, let
\begin{gather*}
g_k^{(m)}(t):=
g(t)^{\underline{m-1}}
\prod_{1\leq i<k}(t-x_i+1)
\prod_{k<i\leq\ell}(t-x_i-m+1).
\end{gather*}
We define the following vector field which is a discrete analogue of~(\ref{eq:sigma1})
\begin{gather*}
\tau_k^m=
\sum_{i, j=1}^\ell
\left(
\sint_{x_i}^{x_j}g_k^{(m)}(t) \Delta t
\right)
\partial_i.
\end{gather*}
Using these vector fields, we can construct an explicit basis for the Shi arrangement.
\begin{Theorem}
$\widetilde{\tau}_1^m-\widetilde{\tau}_2^m, \widetilde{\tau}_2^m-\widetilde{\tau}_3^m, \dots,
\widetilde{\tau}_{\ell-1}^m-\widetilde{\tau}_\ell^m, \theta_0$ and
the Euler vector field
$\theta_E:=z\partial_z+x_1\partial_1+\cdots+x_\ell\partial_\ell$
form a basis for $D(c\Shi_\ell(m))$.
\end{Theorem}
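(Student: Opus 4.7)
The plan is to follow the template of Theorem \ref{thm:cat} and invoke Ziegler's freeness criterion. Restricting $c\Shi_\ell(m)$ to the hyperplane $z = 0$ yields the braid arrangement $\B_\ell$ with multiplicity $2m$, so it suffices to establish two claims: that each $\widetilde{\tau}_k^m - \widetilde{\tau}_{k+1}^m$ lies in $D(c\Shi_\ell(m))$, and that the restrictions $(\widetilde{\tau}_k^m - \widetilde{\tau}_{k+1}^m)|_{z=0}$, together with $\theta_0|_{z=0}$, form a basis of $D(\B_\ell, 2m)$.

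I would handle the basis claim first. The leading (top-total-degree) part of $g_k^{(m)}(t)$ is $g(t)^{m-1}\prod_{q\neq k}(t-x_q) = g(t)^{m-1}g_k(t)$: the falling factorial $g(t)^{\underline{m-1}}$ has leading part $g(t)^{m-1}$, and the shifts $+1$ and $-m+1$ inside the remaining factors drop into lower-order terms. Proposition \ref{prop:leading} then gives $\widetilde{\tau}_k^m|_{z=0} = \sigma_k^m$, and Proposition \ref{prop:simpleroots} immediately yields the basis claim. For the membership claim, $\widetilde{\tau}_k^m - \widetilde{\tau}_{k+1}^m$ has no $\partial_z$ component so annihilates $z$, and the same symmetric bookkeeping as in the proof of Theorem \ref{thm:cat} reduces divisibility by $(x_i - x_j - pz)$ to showing, for $i < j$ and $p > 0$,
$$\sum_{s=0}^{p-1}\bigl[g_k^{(m)}(x_j+s)-g_{k+1}^{(m)}(x_j+s)\bigr] = 0 \quad \text{at } x_i = x_j + p,$$
together with an analogous identity for $p < 0$. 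When $|p| \leq m-1$ every summand already vanishes, because $g(t)^{\underline{m-1}}|_{t=x_j+s}$ contains the zero factor $g(x_j)$ (from $x_j - x_j = 0$) for each $s \in \{0, \dots, m-2\}$, and $\{0, \dots, p-1\}$ is contained in this range.

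The main obstacle is the extremal case $p = m$, where only the single summand $s = m-1$ survives the automatic vanishing, so the asymmetric shifts $+1$ and $-m+1$ in $g_k^{(m)}$ must be exploited. The decisive step is a case split on $k$: at $t = x_j + m - 1$ with $x_i = x_j + m$, if $k > i$ then $q = i$ lies in the range of $\prod_{q<k}$ and the factor $(t-x_i+1)$ evaluates to $(x_j+m-x_i)=0$, whereas if $k \leq i$ then $k < j$ (since $i < j$) and the factor $(t-x_j-m+1)$ in $\prod_{q>k}$ evaluates to $0$. These two cases exhaust all $k$, so $g_k^{(m)}(x_j+m-1) = 0$ for every $k$; replaying the dichotomy with $k+1$ in place of $k$ kills $g_{k+1}^{(m)}(x_j+m-1)$ as well, so the difference vanishes. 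The negative range $p \in \{1-m, \dots, -1\}$ satisfies $|p| \leq m-1$ and is absorbed by the routine argument above. This asymmetry---only $p = m$, not $p = -m$, lies outside the routine range---is exactly the structural distinction between the Shi and Catalan arrangements, and it is precisely what the asymmetric definition of $g_k^{(m)}$ is calibrated to handle.
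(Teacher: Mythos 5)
Your proposal is correct and follows essentially the same route as the paper: Ziegler's criterion with restriction to $z=0$, claim $(b)$ via Propositions~\ref{prop:leading} and~\ref{prop:simpleroots} after identifying the leading part of $g_k^{(m)}$ with $g^{m-1}g_k$, automatic vanishing from the falling factorial for $|p|\leq m-1$, and the shifted product handling the extremal case $p=m$. Your dichotomy there (splitting on $k>i$ versus $k\leq i$) is just a reindexing of the paper's split on $v>k$ versus $v\leq k$, and both show each $g_k^{(m)}(x_j+m-1)$ vanishes individually.
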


\begin{proof}
The strategy is similar to the proof of Theorem~\ref{thm:cat}. It~is sufficient to prove the
following:
\begin{itemize}\itemsep=0pt
\item[$(a)$]
$\widetilde{\tau}_1^m-\widetilde{\tau}_2^m, \widetilde{\tau}_2^m-\widetilde{\tau}_3^m, \dots,
\widetilde{\tau}_{\ell-1}^m-\widetilde{\tau}_\ell^m\in D(c\Shi_\ell(m))$.
\item[$(b)$]
The restrictions
$(\widetilde{\tau}_1^m-\widetilde{\tau}_2^m)|_{z=0},
(\widetilde{\tau}_2^m-\widetilde{\tau}_3^m)|_{z=0}, \dots,
(\widetilde{\tau}_{\ell-1}^m-\widetilde{\tau}_\ell^m)|_{z=0}$
form a basis of $D(\B_\ell, 2m)$.
\end{itemize}
As in the proof of Theorem~\ref{thm:cat}, $(b)$ is obtained from
Propositions~\ref{prop:leading} and~\ref{prop:simpleroots}.

To prove $(a)$, we need to show that for $1\leq u<v\leq \ell$,
\begin{gather*}
\tau_k^m(x_u-x_v)=\ell\sint_{x_u}^{x_v}g_k^{(m)}(t) \Delta t
\end{gather*}
is divisible by $(x_u-x_v-p)$ for any $1-m\leq p\leq m$.
The case $p=0$ is obvious. We~assume $0<p$. In this case, we need
to show that
\begin{gather*}
\sint_{x_v}^{x_v+p}g_k^{(m)}(t) \Delta t=0.
\end{gather*}
By the formula~(\ref{sumformula}), the left-hand side is equal to
\begin{gather*}
\sum_{s=0}^{p-1}g_k^{(m)}(x_v+s).
\end{gather*}
Suppose $0<p<m$. In this case, we have $m>1$. Then
$g(t)^{\underline{(m-1)}}$ vanishes when $t=x_v+s$, $0\leq s\leq m-2$.
The remaining case is $p=m$. Then $x_u=x_v+m$ and $t=x_v+m-1$.
In this case, the product
\begin{gather*}
\prod_{1\leq i<k}(t-x_i+1)
\prod_{k<i\leq\ell}(t-x_i-m+1)
\end{gather*}
is equal to zero. Indeed, if $v>k$, the factor $(t-x_v-m+1)$ vanishes.
If $v\leq k$, then $u<k$. Then the factor $(t-x_u+1)$ vanishes.

The case $1-m\leq p<0$ is similar (simpler).
\end{proof}

\begin{Remark}
In~\cite{tsu}, generalizations of integral formulas of
Bandlow--Musiker are discussed. It~would be interesting to
consider discrete versions of such generalizations.
\end{Remark}

\subsection*{Acknowledgements}

The authors thank Takuro Abe and Misha Feigin for many useful discussions on the topics.
This work was partially supported by JSPS KAKENHI Grant Numbers JP18H01115, JP15KK0144.

\pdfbookmark[1]{References}{ref}
\LastPageEnding

\end{document}